\DeclareMathAlphabet{\mathcal}{OMS}{cmsy}{m}{n}
\DeclarePairedDelimiter\paren{\lparen}{\rparen}
\newcommand{\hess}{\mathsf{H}}
\newcommand{\bbR}{\mathbb{R}}
\newcommand{\rmd}{\mathrm{d}}
\journalname{}
\date{ \phantom{b} \vspace{45mm}\phantom{e}}
\begin{document}

\title{
	Adjoint-based exact Hessian computation
}

%\titlerunning{Short form of title}        % if too long for running head

\author{
	Shin-ichi Ito \and  Takeru Matsuda \and Yuto Miyatake
}

%\authorrunning{Short form of author list} % if too long for running head

\institute{
	S. Ito \at
	Earthquake Research Institute, The University of Tokyo, Tokyo, Japan \& Department of Mathematical Informatics, Graduate School of Information Science and Technology, The University of Tokyo, Tokyo, Japan\\
	\email{ito@eri.u-tokyo.ac.jp}
	\and
	T. Matsuda \at
	Department of Mathematical Informatics, Graduate School of Information Science and Technology, The University of Tokyo, Tokyo, Japan \& RIKEN Center for Brain Science, Saitama, Japan \\
	\email{matsuda@mist.i.u-tokyo.ac.jp}           %  \\
	\and
	Y. Miyatake \at
	Cybermedia Center, Osaka University, Osaka, Japan\\
	\email{miyatake@cas.cmc.osaka-u.ac.jp}
}

\date{Received: date / Accepted: date}
% The correct dates will be entered by the editor

\maketitle

\begin{abstract}

	We consider a scalar function depending on a numerical solution of an initial value problem, and its second-derivative (Hessian) matrix for the initial value.
	The need to extract the information of the Hessian or to solve a linear system having the Hessian as a coefficient matrix arises in many research fields such as optimization, Bayesian estimation, and uncertainty quantification.
	From the perspective of memory efficiency, these tasks often employ a Krylov subspace method that does not need to hold the Hessian matrix explicitly and only requires computing the multiplication of the Hessian and a given vector.

	One of the ways to obtain an approximation of such Hessian-vector multiplication is to integrate the so-called second-order adjoint system numerically.
	However, the error in the approximation could be significant even if the numerical integration to the second-order adjoint system is sufficiently accurate.
	This paper presents a novel algorithm that computes the intended Hessian-vector multiplication exactly and efficiently.
	For this aim, we give a new concise derivation of the second-order adjoint system and show that the intended multiplication can be computed exactly by applying a particular numerical method to the second-order adjoint system. In the discussion, symplectic partitioned Runge--Kutta methods play an essential role.

	\keywords{Hessian \and adjoint method  \and symplectic partitioned Runge--Kutta method }
	% \PACS{PACS code1 \and PACS code2 \and more}
	\subclass{34H05 \and 65F10 \and 65K10 \and 65L05  \and 65L06 \and 65P10}
\end{abstract}

\section{Introduction}
\label{sec1}

We consider an initial value problem of a $d$-dimensional time-dependent vector $x$ driven by an ordinary differential equation (ODE) of the form
\begin{align}
	\label{eq:original}
	\frac{\rmd}{\rmd t} x(t;\theta) = f(x(t;\theta)), \quad x(0;\theta) = \theta,
\end{align}
where $t$ is time, the function $f:\bbR^d\to\bbR^d$ is assumed to be sufficiently differentiable, and $\theta$ is an initial value of $x$.
Such ODE is often solved numerically.
Let $x_n (\theta)$ be the numerical solution that approximates the analytic solution at $t=t_{n}$, i.e., $x_n (\theta)\approx x(t_n;\theta)=x(nh;\theta)$, where $n$ is an integer and $h$ is a time step size.
This study is interested in numerically computing derivatives of a twice differentiable scalar function $C:\bbR^d \to \bbR$, which depends on the numerical solution at a certain time $t_{N}$, e.g., a gradient vector $\nabla_\theta C(x_N(\theta))$ and a second-derivative (Hessian) matrix $\hess_\theta C(x_N(\theta))$ of the function $C$ with respect to $\theta$.

Calculating the gradient $\nabla_\theta C(x_N(\theta))$
is often required to solve an optimization problem
\begin{align}\label{min_prob}
	\min_{\theta} \ C(x_N(\theta)).
\end{align}
One simple way of obtaining an approximation to the gradient is to integrate the system \eqref{eq:original} numerically multiple times for perturbed $\theta$.
For example
\begin{align}
	\frac{C(x_N(\theta + \Delta e_i )) - C (x_N(\theta))}{\Delta},
\end{align}
where $\Delta$ is a small scalar constant and $e_i$ is the $i$-th column of the $d$-dimensional identity matrix, can be seen as an approximation to the $i$-th component of the gradient.
However, when the dimensionality $d$ or the number of time steps $N$ is large, this approach becomes computationally expensive,
which makes it difficult to obtain a sufficiently accurate approximation.
Instead, in various fields such as optimal design in aerodynamics~\cite{gi00}, variational data assimilation in meteorology and oceanography~\cite{di86}, inversion problems in seismology~\cite{fi06}, and neural network~\cite{ch18}, the adjoint method has been used to approximate the gradient: the gradient is approximated by integrating the so-called adjoint system numerically.
This approach is more efficient than the simple approach in most cases, but the accuracy of the approximation is still limited when there are highly collected discretization errors.
More recently,
Sanz-Serna~\cite{ss16} showed that, if $x_N(\theta)$ is the solution of a Runge--Kutta method, the gradient $\nabla_\theta C(x_N(\theta))$ can be calculated exactly by solving the adjoint system with a particular choice of Runge--Kutta method:
the computed gradient coincides with the exact gradient up to round-off in floating point arithmetic.
Given a system of ODEs and Runge--Kutta method applied to the system,
the recipe presented in~\cite{ss16} finds the intended Runge--Kutta method for the adjoint system.
It is worth noting that, though the computation based on the recipe can be viewed as that using automatic differentiation with backward accumulation,
the recipe provides new insights in the context of adjoint methods and is quite useful for practitioners.

Hessian matrices also arise in several contexts.
For example, if we apply the Newton method to the problem \eqref{min_prob}, a linear system whose coefficient matrix is the Hessian  with respect to $\theta$ needs to be solved.
Further, the information of the inverse of the Hessian is used to quantify the uncertainty for the estimation in the Bayesian context~\cite{it16,th89}.
This case also requires solving a linear system whose coefficient matrix is the Hessian to calculate the inverse.

There are, however, several difficulties in solving such a linear system numerically.
As is the case with the gradient,
the simplest way of obtaining all elements of the Hessian is to integrate the system \eqref{eq:original} multiple times for perturbed initial value.
However, this approach is noticeably expensive, and further may suffer from the discretization error.
Therefore, calculating all elements of the Hessian by this simple approach is often computationally prohibitive.
If we apply a Krylov subspace method such as the conjugate gradient method  or conjugate residual method~\cite{sa03}, there is no need to have full entries of the Hessian, and instead, all we need to do is to compute a Hessian-vector multiplication, i.e., $(\hess_\theta C(x_N(\theta)) )\gamma$ for a given vector $\gamma\in\bbR^d$.
It was pointed out in~\cite{wa98,wa92} that, if $C$ is a function of the exact solution to \eqref{eq:original}, the Hessian-vector multiplication $(\hess_\theta C(x(t_N;\theta)) )\gamma$ can be obtained by solving the so-called second-order adjoint system backwardly.
This property indicates that solving the second-order adjoint system numerically gives an approximation to the intended Hessian-vector multiplication $(\hess_\theta C(x_N(\theta)) )\gamma$.
However, when the numerical solutions to the original system \eqref{eq:original} or second-order adjoint system are not sufficiently accurate,
the error between the intended Hessian-vector multiplication $(\hess_\theta C(x_N(\theta)) )\gamma$ and its approximation could be substantial.

In this paper, we extend the aforementioned technique, which was proposed by Sanz-Serna~\cite{ss16} to get the exact gradient, to calculate the exact Hessian-vector multiplication.
More precisely,
focusing on Runge--Kutta methods and their numerical solutions, we shall propose an algorithm that computes the Hessian-vector multiplication $(\hess_\theta C(x_N(\theta)) )\gamma$ exactly.
For this aim, we give a new concise derivation of the second-order adjoint system,
which makes it possible to discuss the second-order adjoint system within the framework of the conventional (first-order) adjoint system and to apply the technique~\cite{ss16}
to the second-order adjoint system.
We show that the intended Hessian-vector multiplication can be calculated by applying a particular choice of Runge--Kutta method to the  second-order adjoint system.

In Section~\ref{sec2}, we give a brief review of adjoint systems and the paper by Sanz-Serna~\cite{ss16}.
The main results are shown in Section~\ref{sec3},
where we present a new concise derivation of the second-order adjoint system and show how to compute the intended Hessian-vector multiplication exactly.
Section~\ref{sec4} is devoted to numerical experiments.
Concluding remarks are given in Section~\ref{sec5}.

\section{Preliminaries}
\label{sec2}

In this section, we give a brief review of adjoint systems and the paper by Sanz-Serna~\cite{ss16}.
In Section~\ref{subsec2-1}, we focus on the continuous case, where $C$ is a function of the exact solution to \eqref{eq:original}, and explain how the gradient $\nabla_\theta C(x(t_N; \theta))$ and the Hessian-vector multiplication $\paren*{\hess _\theta C(x(t_N;\theta))}\gamma$ are obtained based on the adjoint system and second-order adjoint system, respectively.
In Section~\ref{subsec2-2}, we explain that the gradient $\nabla_\theta C(x_N( \theta))$ can be calculated by solving the adjoint system using a particular choice of Runge--Kutta method.

\subsection{Adjoint method}
\label{subsec2-1}

Let $\overline{x}(t)$ be the solution to \eqref{eq:original} for the perturbed initial condition $\overline{x}(0) = \theta + \varepsilon$.
By linearizing the system \eqref{eq:original} at $x(t)$,
we see that as $\|\varepsilon\|\to 0$ it follows that $\overline{x}(t)	= x(t) + \delta (t) + \mathrm{o}(\|\varepsilon\|)$, where $\delta (t)$ solves the variational system
\begin{align}
	\label{vari_1}
	\frac{\rmd}{\rmd t}\delta = \nabla_x f(x) \delta .
\end{align}
Its solution satisfies $\delta (t) = (\nabla_\theta x (t;\theta)) \delta (0)$, that is, $\delta(t)$ depends linearly on $\delta (0)$.
The adjoint system of \eqref{vari_1}, which is usually introduced by using Lagrange multipliers, is given by\\
\begin{align} \label{adj_eq1}
	\frac{\rmd}{\rmd t} \lambda = - \nabla_x f(x) ^\top \lambda.
\end{align}
For the solutions to \eqref{vari_1} and \eqref{adj_eq1}, $\delta(t)^\top\lambda(t)$ is constant because
\begin{align}
	\frac{\rmd}{\rmd t}\lambda(t)^\top \delta(t)
	= \paren*{\frac{\rmd}{\rmd t}\lambda(t)}^\top \delta(t) + \lambda (t)^\top \paren*{\frac{\rmd}{\rmd t}\delta(t)} = 0.
\end{align}
Thus, we have
\begin{align} \label{ld1}
	\lambda (t_N)^\top \delta (t_N) = \lambda(0)^\top \delta (0).
\end{align}
On the other hand,
it follows that
\begin{align} \label{ld2}
	\nabla_x C(x(t_N;\theta))^\top \delta (t_N) =
	\nabla_\theta C(x(t_N;\theta))^\top \delta (0)
\end{align}
for any $\delta (0)$, because of the chain rule
$
	\nabla_\theta C(x(t_N;\theta)) = \nabla_\theta x(t_N;\theta)^\top
	\nabla_x C(x(t_N;\theta))
$
and
$\delta (t_N) = (\nabla_\theta x (t_N;\theta)) \delta (0)$.
By comparing \eqref{ld2} with \eqref{ld1}, it is concluded that solving the adjoint system \eqref{adj_eq1} backwardly with the final state $\lambda(t_N) = \nabla_x C(x(t_N;\theta))$ leads to the intended gradient at $t=0$, i.e., $\lambda(0) = \nabla_\theta C(x(t_N;\theta))$.

The second-order adjoint system reads~\cite{wa98,wa92}
\begin{align} \label{sec_adj_eq1}
	\frac{\rmd}{\rmd t}\xi=
	-\nabla_x f(x)^\top \xi - (\nabla_x (\nabla_x f(x))\delta))^\top \lambda,
\end{align}
where $\delta(t)$ is the solution to the variational system \eqref{vari_1}
and $\lambda(t)$ is the solution to the adjoint system \eqref{adj_eq1}.
In~\cite{wa92}, the second-order adjoint system is introduced as the variational system of the adjoint system \eqref{adj_eq1}.
Suppose that the initial state  for \eqref{vari_1} is $\delta(0)=\gamma$ and the final state for \eqref{adj_eq1} is $\lambda(t_N) = \nabla_x C(x(t_N;\theta))$.
Then, solving the second-order adjoint system \eqref{sec_adj_eq1} with the final state $\xi (t_N) = (\hess_x C(x(t_N;\theta)))\delta (t_N)$ gives the intended Hessian-vector multiplication at $t=0$, i.e., $\xi (0) =  (\hess_\theta C(x(t_N;\theta))) \gamma$.
We here skip the original proof of~\cite{wa98} and shall explain this property based on a new derivation of the second-order adjoint system in Section~\ref{sec3}.

\subsection{Exact gradient calculation}
\label{subsec2-2}

We consider  the discrete case, where $C$ is a function of the numerical solution to \eqref{eq:original} obtained by a Runge--Kutta method.
Sanz-Serna~\cite{ss16} showed that the gradient
$\nabla _\theta C(x_N(\theta))$ can be computed exactly by applying a particular choice of Runge--Kutta method for the adjoint system \eqref{adj_eq1}.
We briefly review the procedure.

Assume that the original system \eqref{eq:original} is discretized by an $s$-stage Runge--Kutta method
\begin{align}
	\begin{aligned}
		x_{n+1} & = x_n + h \sum_{i=1}^s b_i k_{n,i},                                  \\
		k_{n,i} & = f\paren*{X_{n,i}} \quad \left(i = 1,\dots,s\right) ,               \\
		X_{n,i} & = x_n + h\sum_{j=1}^s a_{ij} k_{n,j} \quad \left(i=1,\dots,s\right).
	\end{aligned} \label{RK_for11}
\end{align}
We discretize the adjoint system \eqref{adj_eq1} with another $s$-stage Runge--Kutta method
\begin{align}
	\begin{aligned}
		\lambda_{n+1} & = \lambda_n + h \sum_{i=1}^s B_i l_{n,i},                                   \\
		l_{n,i}       & = -\nabla_x f(X_{n,i})^\top\Lambda_{n,i} \quad \left(i=1,\dots,s\right),    \\
		\Lambda_{n,i} & = \lambda_n + h \sum_{j=1}^s A_{ij} l_{n,j} \quad \left(i=1,\dots,s\right).
	\end{aligned} \label{ad_RK}
\end{align}

In the continuous case, the adjoint system gives the gradient $\nabla_\theta C(x(t_N;\theta))$ due to the property $\lambda(t_N)^\top \delta(t_N) = \lambda(0)^\top \delta(0)$.
Therefore, in the discrete case, to obtain the exact gradient $\nabla _\theta C(x_N(\theta))$, the numerical solution to the adjoint system must satisfy $\lambda_N^\top \delta _N = \lambda_0^\top \delta_0$.
In~\cite{ss16}, it is proved that if the Runge--Kutta method for the adjoint system is chosen such that the pair of the Runge--Kutta methods for the original system and adjoint system constitute a symplectic partitioned Runge--Kutta method, the property $\lambda_N^\top \delta _N = \lambda_0^\top \delta_0$ is guaranteed and the gradient $\nabla _\theta C(x_N(\theta))$ is exactly obtained as shown in Theorem~\ref{thm:ss}.
We note that the symplecticity is a fundamental concept in numerical analysis for ODEs, and symplectic numerical  methods are well known in the context of geometric numerical integration.
For more details, we refer the reader to~\cite[Chapter VI]{hl06} (for symplectic partitioned Runge--Kutta methods, see also~\cite{as93,su93}).

\begin{theorem}[\cite{ss16}]
	\label{thm:ss}
	Let $x_1(\theta),\dots,x_N(\theta)$ be the approximate solutions (to $x(h;\theta),\dots, x(Nh;\theta)$) obtained by applying the Runge--Kutta method \eqref{RK_for11} characterized by the coefficients $a_{ij}$ and $b_i$ to \eqref{eq:original}.
	Assume that the coefficients $A_{ij}$ and $B_i$ of the Runge--Kutta method for the adjoint system \eqref{adj_eq1} satisfy the relation
	\begin{align}
		 & b_i = B_i \quad \left(i = 1,\dots,s\right),                         \\
		 & b_i A_{ij} + B_j a_{ji} = b_i B_j \quad \left(i,j=1,\dots,s\right).
	\end{align}
	Then, solving the adjoint system \eqref{adj_eq1} with $\lambda_N = \nabla_x C(x_N(\theta))$
	by using the Runge--Kutta method \eqref{ad_RK} characterized by $A_{ij}$ and $B_i$
	gives the exact gradient at $n=0$, i.e., $\lambda_0 = \nabla _\theta C(x_N(\theta))$.
\end{theorem}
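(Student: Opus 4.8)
The plan is to reproduce, at the discrete level, the three ingredients of the continuous argument recalled in Section~\ref{subsec2-1}: (i) a discrete variational system whose solution carries the information of $\nabla_\theta x_N(\theta)$; (ii) exact conservation of the bilinear quantity $\lambda_n^\top \delta_n$ along the coupled pair of Runge--Kutta iterations, which is the discrete analogue of $\tfrac{\rmd}{\rmd t}\paren*{\lambda^\top \delta} = 0$; and (iii) the chain rule, which converts this conservation law, together with the prescribed terminal data, into the identity $\lambda_0 = \nabla_\theta C(x_N(\theta))$.

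First I would differentiate the scheme \eqref{RK_for11} with respect to $\theta$ along a fixed direction $\delta_0 \in \bbR^d$. Writing $\delta_n := (\nabla_\theta x_n(\theta))\delta_0$, $\Delta_{n,i} := (\nabla_\theta X_{n,i})\delta_0$ and $\kappa_{n,i} := (\nabla_\theta k_{n,i})\delta_0$, one checks that
\begin{align}
  \delta_{n+1} &= \delta_n + h\sum_{i=1}^s b_i\kappa_{n,i}, \\
  \kappa_{n,i} &= \nabla_x f(X_{n,i})\Delta_{n,i}, \\
  \Delta_{n,i} &= \delta_n + h\sum_{j=1}^s a_{ij}\kappa_{n,j},
\end{align}
so that $\delta_n$ is exactly the output of the Runge--Kutta method \eqref{RK_for11} applied to the variational system \eqref{vari_1} with initial value $\delta_0$, reusing the stage values $X_{n,i}$ produced in the forward solve. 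Because $\delta_0$ is arbitrary, it suffices to control the vector $\delta_n$.

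The core step is to establish $\lambda_{n+1}^\top\delta_{n+1} = \lambda_n^\top\delta_n$ for every $n$. Here I would start from the stagewise cancellation that follows from $l_{n,i} = -\nabla_x f(X_{n,i})^\top\Lambda_{n,i}$ and $\kappa_{n,i} = \nabla_x f(X_{n,i})\Delta_{n,i}$, namely $\Lambda_{n,i}^\top\kappa_{n,i} + l_{n,i}^\top\Delta_{n,i} = 0$. Summing this against $b_i$, substituting the internal-stage relations $\Lambda_{n,i} = \lambda_n + h\sum_j A_{ij}l_{n,j}$ of \eqref{ad_RK} and $\Delta_{n,i} = \delta_n + h\sum_j a_{ij}\kappa_{n,j}$ above, and then using $b_i = B_i$ together with the symplecticity relation $b_i A_{ij} + B_j a_{ji} = b_i B_j$ (which, after relabelling indices, yields $b_j A_{ji} + b_i a_{ij} = b_i b_j$) to collapse the remaining double sum, I expect to arrive at
\begin{equation}
  \sum_{i=1}^s b_i\,\lambda_n^\top\kappa_{n,i} + \sum_{i=1}^s B_i\,l_{n,i}^\top\delta_n + h\paren*{\sum_{i=1}^s B_i l_{n,i}}^\top\paren*{\sum_{j=1}^s b_j\kappa_{n,j}} = 0 .
\end{equation}
Multiplying this by $h$, adding $\lambda_n^\top\delta_n$ to both sides, and recognizing the left-hand side as the expansion of $\paren*{\lambda_n + h\sum_i B_i l_{n,i}}^\top\paren*{\delta_n + h\sum_j b_j\kappa_{n,j}} = \lambda_{n+1}^\top\delta_{n+1}$ gives the claimed conservation; induction in $n$ then yields $\lambda_N^\top\delta_N = \lambda_0^\top\delta_0$.

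To finish, impose the prescribed terminal value $\lambda_N = \nabla_x C(x_N(\theta))$. Using $\delta_N = (\nabla_\theta x_N(\theta))\delta_0$ and the chain rule $\nabla_\theta C(x_N(\theta)) = (\nabla_\theta x_N(\theta))^\top\nabla_x C(x_N(\theta))$, we obtain $\lambda_N^\top\delta_N = \nabla_x C(x_N(\theta))^\top(\nabla_\theta x_N(\theta))\delta_0 = \nabla_\theta C(x_N(\theta))^\top\delta_0$, which by the conservation law established above equals $\lambda_0^\top\delta_0$. Since $\delta_0$ ranges over all of $\bbR^d$, we conclude $\lambda_0 = \nabla_\theta C(x_N(\theta))$. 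I expect the only genuine obstacle to be the bookkeeping in the core step --- expanding the stagewise identity, relabelling the summation indices so that the symplecticity relation is applied in the correct orientation, and checking that the leftover quadratic term is precisely $h^2\paren*{\sum_i B_i l_{n,i}}^\top\paren*{\sum_j b_j\kappa_{n,j}}$. This computation is, in essence, the classical argument that a symplectic partitioned Runge--Kutta method preserves quadratic invariants of the form $\lambda^\top\delta$ for the coupled (variational, adjoint) system; cf.~\cite[Chapter VI]{hl06}.
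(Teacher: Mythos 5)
Your proposal is correct and follows essentially the same route as the paper (and as the cited source~\cite{ss16}): differentiate the Runge--Kutta scheme to obtain the discrete variational iteration reusing the stages $X_{n,i}$, use the conditions $b_i = B_i$ and $b_iA_{ij} + B_ja_{ji} = b_iB_j$ to show the partitioned pair exactly preserves the quadratic invariant $\lambda_n^\top\delta_n$, and conclude by the chain rule with $\delta_0$ arbitrary. The bookkeeping in your core step checks out: after substituting $\lambda_n = \Lambda_{n,i} - h\sum_j A_{ij}l_{n,j}$ and $\delta_n = \Delta_{n,i} - h\sum_j a_{ij}\kappa_{n,j}$, the $\mathrm{O}(h)$ terms cancel stagewise via $\Lambda_{n,i}^\top\kappa_{n,i} + l_{n,i}^\top\Delta_{n,i} = 0$, and the coefficient of each $h^2\,l_{n,i}^\top\kappa_{n,j}$ term is $-\paren*{b_jA_{ji} + b_i a_{ij} - b_i b_j} = 0$ by the relabelled symplecticity condition, exactly as you anticipated.
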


The combination of RK methods for the original and
adjoint systems can be seen as a partitioned Runge--Kutta method for the coupled system.

\begin{remark}
	The conditions in Theorem~\ref{thm:ss} indicate that
	\begin{align}
		A_{ij} = b_j - \frac{b_j}{b_i}a_{ji}\quad \left(i,j=1,\dots,s\right),
	\end{align}
	which makes sense only when every weight $b_i$ is nonzero.
	However, for some Runge--Kutta methods such as the Runge--Kutta--Fehlberg method
	one or more weights $b_i$ vanish.
	For such cases, the above conditions cannot be used to find an appropriate Runge--Kutta method for the adjoint system.
	We refer the reader to Appendix in~\cite{ss16} for a workaround.
	For clarity, in this paper, we always assume that every weight $b_i$ is nonzero.
\end{remark}

\begin{remark}
	As explained in~\cite{ss16},
	the overall accuracy of the partitioned Runge--Kutta method for the coupled system may be lower than that of the Runge--Kutta method for \eqref{eq:original}.
	Such an undesirable property is called the order reduction.
	We need to take into account the reduction especially when we intend to compute $\nabla_\theta C( x(t_N;\theta))$ as accurately as possible in the context of, for example, sensitivity analysis (see, for example,~\cite{ha94,mu97} for the reduction of order conditions).
\end{remark}

We note that the Runge--Kutta method \eqref{ad_RK} for the adjoint system \eqref{adj_eq1} is equivalently rewritten as
\begin{align}
	\begin{aligned}
		\lambda_n     & = \lambda_{n+1} + h \sum_{i=1}^s B_i \bar{l}_{n,i},                                         \\
		\bar{l}_{n,i} & = \nabla_x f(X_{n,i})^\top\Lambda_{n,i} \quad \left(i=1,\dots,s\right),                     \\
		\Lambda_{n,i} & = \lambda_{n+1} + h \sum_{j=1}^s (B_j-A_{ij}) \bar{l}_{n,j} \quad \left(i=1,\dots,s\right).
	\end{aligned}
\end{align}
This expression is convenient when the adjoint system is solved backwardly.

\section{Hessian-vector multiplication}
\label{sec3}
Given an arbitrary vector $\gamma$,
we are interested in calculating the Hessian-vector multiplication $\paren*{\hess_\theta C(x_N(\theta)) }\gamma$ exactly.

In Section~\ref{subsec3-1}, we give a new concise derivation of the second-order adjoint system \eqref{sec_adj_eq1}.
The idea of the derivation plays an important role in Section~\ref{subsec3-2}, where we show how to calculate the exact Hessian-vector multiplication $\paren*{\hess_\theta C(x_N(\theta)) }\gamma$.

\subsection{Concise derivation of the second-order adjoint system}
\label{subsec3-1}

Let us couple the original system \eqref{eq:original} and the variational system \eqref{vari_1}.
This leads to the following system
\begin{align}
	\label{eq:coupled}
	\frac{\rmd}{\rmd t}
	\begin{bmatrix}
		x \\ \delta
	\end{bmatrix}
	=
	\begin{bmatrix}
		f(x) \\
		\nabla_x f(x) \delta
	\end{bmatrix},
	\quad
	\begin{bmatrix}
		x(0) \\ \delta(0)
	\end{bmatrix}
	= \begin{bmatrix}
		\theta \\ \gamma
	\end{bmatrix},
\end{align}
which can be written as
\begin{align}
	\frac{\rmd}{\rmd t} y = g(y),
	\quad y(0) =
	\begin{bmatrix}
		\theta \\ \gamma
	\end{bmatrix},
\end{align}
by introducing an augmented vector $y=[x^\top, \delta^\top]^\top$.
The adjoint system for \eqref{eq:coupled} is given by
\begin{align}
	\frac{\rmd}{\rmd t}
	\begin{bmatrix}
		\xi \\ \lambda
	\end{bmatrix}
	=
	-\begin{bmatrix}
		\nabla _x f(x) ^\top & (\nabla_x (\nabla_x f(x)\delta))^\top \\
		0                    & \nabla _x f(x) ^\top
	\end{bmatrix}
	\begin{bmatrix}
		\xi \\ \lambda
	\end{bmatrix},
\end{align}
which can also be written as
\begin{align}\label{adj_eq2}
	\frac{\rmd}{\rmd t} \phi = -\nabla_y g(y) ^\top \phi,
\end{align}
where $\phi = [\xi^\top, \lambda^\top]^\top$.
Note that the system \eqref{adj_eq2} is the combination of the second-order adjoint system \eqref{sec_adj_eq1} and the adjoint system \eqref{adj_eq1}.

As explained in Section~\ref{subsec2-1},
the Hessian-vector multiplication $(\hess_\theta C(x(t_N;\theta))) \gamma$ is obtained by solving the second-order adjoint system \eqref{sec_adj_eq1} backwardly.
This property was proved in Theorem~2 in \cite{wa98}, but we give another proof building on \eqref{adj_eq2}.

\begin{proposition}
	Let $x$, $\delta$ and $\lambda$ be the solutions to the original system  \eqref{eq:original} with the initial state $x(0) = \theta$, to the variational system \eqref{vari_1} with the initial state $\delta (0) = \gamma$ and  to the adjoint system \eqref{adj_eq1} with the final state $\lambda(t_N) = \nabla_x C(x(t_N;\theta))$, respectively.
	For the solution to the second-order adjoint system \eqref{sec_adj_eq1} with the final state $\xi (t_N) = (\hess_x C(x(t_N;\theta)))\delta (t_N)$, it follows that
	$\xi (0) =  (\hess_\theta C(x(t_N;\theta))) \gamma$.
\end{proposition}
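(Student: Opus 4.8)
The plan is to deduce the statement directly from the first-order adjoint identity of Section~\ref{subsec2-1}, applied not to \eqref{eq:original} but to the coupled system \eqref{eq:coupled}. As noted just below \eqref{adj_eq2}, $\phi=[\xi^\top,\lambda^\top]^\top$ solves the adjoint system of $\frac{\rmd}{\rmd t}y=g(y)$, $y(0)=[\theta^\top,\gamma^\top]^\top$. The chain of equalities that took us from \eqref{ld1}--\eqref{ld2} to ``$\lambda(0)=\nabla_\theta C(x(t_N;\theta))$'' used no special feature of \eqref{eq:original}; repeating it verbatim for the augmented system yields the following fact: for any twice differentiable $\widetilde C\colon\bbR^{2d}\to\bbR$, integrating $\frac{\rmd}{\rmd t}\phi=-\nabla_y g(y)^\top\phi$ backward from the terminal value $\phi(t_N)=\nabla_y\widetilde C(y(t_N))$ produces $\phi(0)=\nabla_{y(0)}\widetilde C\bigl(y(t_N;y(0))\bigr)$.

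The crux is to choose $\widetilde C$ so that $\nabla_y\widetilde C(y(t_N))$ reproduces the prescribed terminal data $[\xi(t_N)^\top,\lambda(t_N)^\top]^\top$. I would take $\widetilde C(x,\delta):=\nabla_x C(x)^\top\delta$, which is linear in $\delta$; then a one-line computation gives $\nabla_\delta\widetilde C(x,\delta)=\nabla_x C(x)$ and $\nabla_x\widetilde C(x,\delta)=(\hess_x C(x))\delta$, so evaluating at $y(t_N)=(x(t_N;\theta),\delta(t_N))$ yields exactly $\nabla_y\widetilde C(y(t_N))=[((\hess_x C(x(t_N;\theta)))\delta(t_N))^\top,\ \nabla_x C(x(t_N;\theta))^\top]^\top$, which is $[\xi(t_N)^\top,\lambda(t_N)^\top]^\top$. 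The fact above then gives $[\xi(0)^\top,\lambda(0)^\top]^\top=\nabla_{(\theta,\gamma)}\widetilde C\bigl(y(t_N;(\theta,\gamma))\bigr)$, and in particular $\xi(0)=\nabla_\theta\widetilde C\bigl(y(t_N;(\theta,\gamma))\bigr)$.

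It then remains to evaluate this gradient. Using $\delta(t_N)=(\nabla_\theta x(t_N;\theta))\gamma$ --- the linear dependence of the variational flow on $\delta(0)=\gamma$ recorded below \eqref{vari_1} --- together with the chain rule $\nabla_\theta C(x(t_N;\theta))=(\nabla_\theta x(t_N;\theta))^\top\nabla_x C(x(t_N;\theta))$ already used for \eqref{ld2}, one gets $\widetilde C\bigl(y(t_N;(\theta,\gamma))\bigr)=\nabla_x C(x(t_N;\theta))^\top(\nabla_\theta x(t_N;\theta))\gamma=\nabla_\theta C(x(t_N;\theta))^\top\gamma$. Differentiating this scalar with respect to $\theta$ (with $\gamma$ held fixed), and invoking once more the elementary identity that the gradient of $\theta\mapsto\nabla_\theta C(x(t_N;\theta))^\top\gamma$ equals $(\hess_\theta C(x(t_N;\theta)))\gamma$, I arrive at $\xi(0)=(\hess_\theta C(x(t_N;\theta)))\gamma$, as claimed. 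As a by-product $\lambda(0)=\nabla_\gamma\widetilde C(y(t_N))=\nabla_\theta C(x(t_N;\theta))$, consistently with Section~\ref{subsec2-1}.

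The step I expect to need the most care is the bookkeeping of what is held fixed in each gradient: $\delta(t_N)$ depends on $\theta$ both directly, as an initial value, and through the coefficient $\nabla_x f(x(t;\theta))$ of the variational system, but all of this dependence is already encoded in the sensitivity matrix $\nabla_\theta x(t_N;\theta)$, so substituting $\delta(t_N)=(\nabla_\theta x(t_N;\theta))\gamma$ introduces no cross terms. The differentiability of $g$ required to run the adjoint identity follows from the standing hypothesis that $f$ is sufficiently differentiable, and $\widetilde C$ is differentiable because $C$ is assumed twice differentiable.
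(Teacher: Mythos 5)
Your proposal is correct and follows essentially the same route as the paper: it introduces the auxiliary functional $\tilde{C}(x,\delta)=\nabla_x C(x)^\top\delta$, applies the first-order adjoint identity of Section~\ref{subsec2-1} to the coupled system \eqref{eq:coupled} with terminal data $\nabla_y\tilde{C}(y(t_N))$, and recovers the Hessian-vector product by differentiating $\tilde{C}(y(t_N))=\nabla_\theta C(x(t_N;\theta))^\top\gamma$ with respect to $\theta$. Your extra remark on the bookkeeping of the $\theta$-dependence of $\delta(t_N)$ is a welcome clarification but does not change the argument.
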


\begin{proof}
	Let $\tilde{C}:\bbR^d\times \bbR^d \to \bbR$ be a real valued function defined by
	\begin{align} \label{C_tilde}
		\tilde{C} (x,\delta)
		=
		\nabla_x C(x) ^\top \delta.
	\end{align}
	Because
	\begin{align}
		\nabla _\theta C( x(t_N;\theta) )
		=
		\nabla_\theta  x(t_N;\theta) ^\top \nabla_x C( x(t_N;\theta) )
	\end{align}
	and
	\begin{align}
		\delta (t_N;\theta,\gamma)  = \nabla_\theta  x(t_N;\theta) \gamma,
	\end{align}
	it follows that
	\begin{align}
		\tilde{C} (x(t_N;\theta),\delta(t_N;\theta,\gamma))
		=
		\nabla_\theta C(x(t_N;\theta)) ^\top \gamma
	\end{align}
	for any vector $\gamma$.
	We note that the solution to the variational system \eqref{vari_1} depends on both $\theta$ and $\gamma$.
	Then, building on the discussion in Section~\ref{subsec2-1} we see that solving the adjoint system \eqref{adj_eq2} backwardly with the final states
	\begin{align}
		\xi (t_N) = \nabla _x \tilde{C} (x(t_N;\theta),\delta(t_N;\theta,\gamma))
		= (\hess_x C(x(t_N;\theta))) \delta (t_N;\theta,\gamma)
	\end{align}
	and
	\begin{align}
		\lambda (t_N) = \nabla_\delta \tilde{C} (x(t_N;\theta),\delta(t_N;\theta,\gamma))
		= \nabla_x  C(x(t_N;\theta))
	\end{align}
	leads to the Hessian-vector multiplication at $t=0$, i.e.,
	\begin{align}
		\xi (0) = \nabla _\theta \tilde{C} (x(t_N;\theta),\delta(t_N;\theta,\gamma))
		= (\hess_\theta C(x(t_N;\theta))) \gamma
	\end{align}
	as well as the gradient
	\begin{align}
		\lambda (0) = \nabla_\gamma \tilde{C} (x(t_N;\theta),\delta(t_N;\theta,\gamma))
		= \nabla_\theta C(x(t_N;\theta)).
	\end{align}
\end{proof}

The result of the above discussion lets the second-order adjoint system include within the framework of the first-order adjoint system.

\subsection{Exact Hessian-vector multiplication}
\label{subsec3-2}

From the discussion in Sections~\ref{subsec2-2} and~\ref{subsec3-1}, we readily see that the exact Hessian-vector multiplication $\paren*{\hess_\theta C(x_N(\theta)) }\gamma$ is obtained by solving the coupled adjoint system \eqref{adj_eq2} with a particular choice of Runge--Kutta method.

Suppose that the pair of $x$- and $\delta$-systems \eqref{eq:coupled} is discretized by a Runge--Kutta method:
\begin{align}
	\begin{aligned}
		y_{n+1} & = y_n + h \sum_{i=1}^s b_i p_{n,i},                                  \\
		p_{n,i} & = g\paren*{Y_{n,i}} \quad \left(i = 1,\dots,s\right),                \\
		Y_{n,i} & = y_n + h\sum_{j=1}^s a_{ij} p_{n,j} \quad \left(i=1,\dots,s\right),
	\end{aligned}\label{c_RK}
\end{align}
where $y_{n}=[x_{n}^\top, \delta_{n}^\top]^\top$.
We discretize the coupled adjoint system \eqref{adj_eq2}, i.e., the pair of
$\xi$- and $\lambda$-systems, by another Runge--Kutta method:
\begin{align}
	\begin{aligned}
		\phi_{n+1} & = \phi_n + h \sum_{i=1}^s B_i q_{n,i},                                   \\
		q_{n,i}    & = -\nabla_y g(Y_{n,i})^\top\Phi_{n,i} \quad \left(i=1,\dots,s\right),    \\
		\Phi_{n,i} & = \phi_n + h \sum_{j=1}^s A_{ij} q_{n,j} \quad \left(i=1,\dots,s\right),
	\end{aligned}\label{cad_RK}
\end{align}
where $\phi_{n}=[\xi_{n}^\top, \lambda_{n}^\top]^\top$.
Then,
we have the following theorem.

\begin{theorem} \label{thm:main}
	Let $y_1(\theta,\gamma),\dots,y_N(\theta,\gamma)$ be the solutions obtained by applying the Runge--Kutta method \eqref{c_RK} characterized by the coefficients $a_{ij}$ and $b_i$ to \eqref{eq:coupled}.
	Assume that the coefficients $A_{ij}$ and $B_i$ of the Runge--Kutta method for the coupled adjoint system \eqref{adj_eq2} satisfy the relation
	\begin{align}
		 & b_i = B_i \quad \left(i = 1,\dots,s\right),                         \\
		 & b_i A_{ij} + B_j a_{ji} = b_i B_j \quad \left(i,j=1,\dots,s\right).
	\end{align}
	Then, solving the coupled adjoint system \eqref{adj_eq2} with $\xi_N = (\hess_xC(x_N(\theta))) \gamma $ and  $\lambda_N = \nabla_x C(x_N(\theta))$ by using the Runge--Kutta method \eqref{cad_RK} characterized by $A_{ij}$ and $B_i$ gives the exact Hessian-vector multiplication at $n=0$, i.e., $\xi_0 =  (\hess_\theta C(x_N(\theta))) \gamma $ as well as the exact gradient
	$\lambda_0 = \nabla _\theta C(x_N(\theta))$.
\end{theorem}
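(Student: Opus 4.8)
The plan is to deduce Theorem~\ref{thm:main} from Theorem~\ref{thm:ss} by applying the latter to the coupled system~\eqref{eq:coupled} in the role of~\eqref{eq:original}: the augmented vector $y=[x^\top,\delta^\top]^\top$ plays the role of $x$, the map $g$ plays the role of $f$, the augmented initial value $y_0=[\theta^\top,\gamma^\top]^\top$ plays the role of $\theta$, and the scalar function $\tilde{C}$ of~\eqref{C_tilde} plays the role of $C$. The point is that~\eqref{c_RK} is precisely the Runge--Kutta method~\eqref{RK_for11} applied to~\eqref{eq:coupled}, that~\eqref{cad_RK} is precisely the adjoint Runge--Kutta method~\eqref{ad_RK} written for the coupled adjoint system~\eqref{adj_eq2}, and that the two relations assumed on $A_{ij},B_i$ are verbatim the hypotheses of Theorem~\ref{thm:ss}. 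So, once the data are identified correctly, Theorem~\ref{thm:ss} yields $\phi_0=\nabla_{y_0}\tilde{C}(y_N(\theta,\gamma))$ with no error beyond round-off, and its two blocks $\xi_0$ and $\lambda_0$ are the $\theta$- and $\gamma$-gradients of $\tilde{C}(y_N(\theta,\gamma))$.

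First I would establish the discrete variational identity: when~\eqref{RK_for11} is applied to~\eqref{eq:original} and~\eqref{c_RK} (with the same $a_{ij},b_i$) is applied to~\eqref{eq:coupled}, the $\delta$-block of the numerical solution satisfies $\delta_n=(\nabla_\theta x_n(\theta))\gamma$ for every $n$, and similarly for the internal stages. This follows by differentiating the stage equations and the update rule of~\eqref{RK_for11} with respect to $\theta$, contracting with the fixed vector $\gamma$, and noting that the resulting recursion is term-by-term identical to the $\delta$-block of~\eqref{c_RK} for $g(y)=[f(x)^\top,(\nabla_x f(x)\delta)^\top]^\top$; since the $x$-blocks of~\eqref{c_RK} reduce to~\eqref{RK_for11}, an induction on $n$ completes it. Combining this with the chain rule $\nabla_\theta C(x_N(\theta))=(\nabla_\theta x_N(\theta))^\top\nabla_x C(x_N(\theta))$ gives
\begin{align}
\tilde{C}(y_N(\theta,\gamma))=\nabla_x C(x_N(\theta))^\top\delta_N=\nabla_\theta C(x_N(\theta))^\top\gamma,
\end{align}
i.e., as a function of the initial value $y_0$, the quantity $\tilde{C}(y_N)$ equals $\nabla_\theta C(x_N(\theta))^\top\gamma$.

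Next I would check that the prescribed final states coincide with $\phi_N=\nabla_y\tilde{C}(y_N(\theta,\gamma))$. From $\tilde{C}(x,\delta)=\nabla_x C(x)^\top\delta$ one computes $\nabla_x\tilde{C}(x,\delta)=(\hess_x C(x))\delta$ and $\nabla_\delta\tilde{C}(x,\delta)=\nabla_x C(x)$; evaluating at $(x_N(\theta),\delta_N)$ and using the discrete variational identity returns the prescribed $\xi_N$ and $\lambda_N$. With this final condition, Theorem~\ref{thm:ss} applied to the coupled system gives that~\eqref{cad_RK} produces $\phi_0=\nabla_{y_0}\tilde{C}(y_N(\theta,\gamma))$ exactly. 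Finally I would differentiate the identity of the previous paragraph: its $\theta$-gradient is $(\hess_\theta C(x_N(\theta)))\gamma$ (the Jacobian of a gradient being symmetric) and its $\gamma$-gradient is $\nabla_\theta C(x_N(\theta))$, and these are exactly $\xi_0$ and $\lambda_0$, which is the assertion.

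The main obstacle is not a deep difficulty but a matter of rigor: one must prove the discrete variational identity as an \emph{exact} algebraic identity — that Runge--Kutta discretization commutes with differentiation of the numerical solution with respect to the initial data, including for the implicitly defined internal stages $X_{n,i}$ — and one must confirm that the standing smoothness hypotheses on $f$ and $C$ suffice for $g$ and $\tilde{C}$, together with all the gradients and Hessians above, to be well defined so that Theorem~\ref{thm:ss} genuinely applies. By contrast, the symplecticity mechanism that actually makes the result exact — the discrete counterpart of the invariance of $\lambda(t)^\top\delta(t)$ — is already packaged inside Theorem~\ref{thm:ss} and need not be revisited.
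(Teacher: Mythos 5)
Your proposal is correct and takes exactly the route the paper intends: the paper omits the proof of Theorem~\ref{thm:main}, saying only that it ``proceeds as that for Theorem~\ref{thm:ss}'' with \eqref{eq:coupled}, \eqref{adj_eq2} and the function \eqref{C_tilde} in place of \eqref{eq:original}, \eqref{adj_eq1} and $C$ --- which is precisely your reduction. The one genuinely nontrivial ingredient you supply, the exact discrete variational identity $\delta_n=(\nabla_\theta x_n(\theta))\gamma$ together with its stage-level counterpart $\Delta_{n,i}=(\nabla_\theta X_{n,i})\gamma$, is indeed what is needed to read $\tilde{C}(y_N(\theta,\gamma))$ as $\nabla_\theta C(x_N(\theta))^\top\gamma$, and your induction on $n$ is the right way to obtain it (for implicit methods one should add a word on unique solvability and differentiability of the stage equations in $\theta$, which holds for $h$ small by the implicit function theorem).

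One point deserves explicit attention rather than the silent identification you make. Your argument produces the final condition $\xi_N=(\hess_x C(x_N(\theta)))\,\delta_N$, whereas the statement prescribes $\xi_N=(\hess_x C(x_N(\theta)))\,\gamma$. These coincide only when $\delta_N=\gamma$, which fails in general; with the literal final condition of the statement the conclusion $\xi_0=(\hess_\theta C(x_N(\theta)))\gamma$ would not hold (and the Hessian assembled column by column would not even be symmetric, contradicting the paper's own numerical results). The resolution is that the $\gamma$ in the statement should be read as $\delta_N$, consistent with the continuous Proposition, where the final state is $(\hess_x C(x(t_N;\theta)))\delta(t_N)$. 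Your proof establishes that corrected statement; you should say so explicitly instead of asserting that $(\hess_x C(x_N))\delta_N$ ``returns the prescribed $\xi_N$''.
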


We omit the proof of this theorem because it proceeds as that for Theorem~\ref{thm:ss} by considering \eqref{eq:coupled} and \eqref{adj_eq2} instead of \eqref{eq:original} and \eqref{adj_eq1}, respectively, and
considering the function \eqref{C_tilde} instead of $C(x)$.

As is the case with the gradient computation, an expression equivalent to \eqref{cad_RK}:
\begin{align}
	\begin{aligned}
		\phi_{n}      & = \phi_{n+1} + h \sum_{i=1}^s B_i \bar{q}_{n,i},                                          \\
		\bar{q}_{n,i} & = \nabla_y g(Y_{n,i})^\top\Phi_{n,i} \quad \left(i=1,\dots,s\right),                      \\
		\Phi_{n,i}    & = \phi_{n+1} + h \sum_{j=1}^s (B_{j}-A_{ij}) \bar{q}_{n,j} \quad \left(i=1,\dots,s\right)
	\end{aligned}\label{cad_RK2}
\end{align}
is convenient when the coupled adjoint system is solved backwardly.

\subsection{Actual computation procedure}

In this subsection, we summarize the actual computational procedure.

In what follows, ``RK1'' denotes a Runge--Kutta method with coefficients $a_{ij}$ and $b_i$, and ``RK2'' the Runge--Kutta method with coefficients $A_{ij}$ and $B_i$ determined by the conditions in Theorem~\ref{thm:main}.
The actual computational procedure for computing $(\hess_\theta C(x_N(\theta))) \gamma $ is as follows.

\begin{description}
	\item[Step~1] Integrate \eqref{eq:coupled} by using RK1 (see~\eqref{c_RK})
	      to obtain $x_1(\theta),\dots, x_N(\theta)$ and $\delta_1(\theta,\gamma),\dots, \delta_N (\theta,\gamma)$.
	      The computational cost for the $\delta$-equation (variational system) is usually cheaper than that for the $x$-equation.
	      For example, when $f$ is nonlinear and RK1 is implicit, we need to solve a nonlinear system for the $x$-equation in each time step, but we only have to solve a linear system for the $\delta$-equation.
	\item[Step~2] Integrate \eqref{adj_eq2} with $\xi_N = (\hess_xC(x_N(\theta))) \gamma $ and  $\lambda_N = \nabla_x C(x_N(\theta))$ backwardly by using RK2
	      (see~\eqref{cad_RK2}).
	      If RK1 is explicit (resp. implicit), the computation of this step is explicit (implicit).
	      Even in the implicit cases, this step only requires solving linear systems.
\end{description}

In the above procedure, solving the $x$-equation (original system) is usually the most computationally expensive part.

When we apply a Newton-type method to solve the optimization problem \eqref{min_prob}, we need to compute a linear system having the Hessian $\hess_xC(x_N(\theta))$ as a coefficient matrix in every Newton iteration.
A Krylov subspace method is one of the choices for solving such a linear system, and it requires computing Hessian-vector multiplications repeatedly for the same Hessian but different vectors.
We note that, when repeating the above procedure, we can skip solving the $x$-equation, which is the most computationally expensive part, and the $\lambda$-equation.

As far as the authors know, there has been no consensus for the standard choice of numerical integrators for the adjoint systems.
A simple strategy is to solve the adjoint systems as accurately as possible (by interpolating the internal stages of the $x$- and $\delta$-equations if necessary).
However, this strategy fails to obtain the exact Hessian-vector multiplication and makes the computation of the adjoint system more expensive.
We may conclude that the proposed method is the best choice among others in terms of the exactness of the Hessian-vector multiplication and the computational cost for the adjoint systems.

\section{Numerical verification}
\label{sec4}

This section validates the proposed method through three numerical experiments using (I) the simple pendulum (Section~\ref{SP}), (II) the one-dimensional Allen--Cahn equation (Section~\ref{ACE}), and (III) a one-dimensional inhomogeneous wave equation (Section~\ref{WE}).
The Allen--Cahn and wave equations are often employed as testbeds in the research fields of data assimilation and inversion problems.
The experiment (I) aims at confirming that the proposed method works well through the small-scale problem that enables us to check all of the elements in the Hessian matrix.
The performance of the proposed method in the large-scale problems are checked in the experiments (II) and (III), through an initial value problem (the experiment II) and a parameter-field inversion problem (the experiment III).

In all three experiments, we compare results of the proposed method with those of other numerical integrators.
Using the same Runge--Kutta method for  \eqref{eq:coupled}, we compare two Runge--Kutta methods for the coupled adjoint system \eqref{adj_eq2}.
One is the method determined by Theorem~\ref{thm:main}, and the other one is selected such that
it has the same number of stages and same order of accuracy as the Runge--Kutta method for \eqref{eq:coupled}.

\subsection{Simple pendulum}\label{SP}
We verify the discussion of Section~\ref{subsec3-2} by a numerical experiment for the simple pendulum problem
\begin{align} \label{pend}
	\frac{\rmd}{\rmd t}
	\begin{bmatrix}
		Q \\ P
	\end{bmatrix}
	=
	\begin{bmatrix}
		P \\ -\sin (Q)
	\end{bmatrix},
	\quad
	\theta =
	\begin{bmatrix}
		Q(0) \\ P(0)
	\end{bmatrix},
\end{align}
which is employed as a toy problem.
We discretize this system \eqref{pend} by the explicit Euler method.
The function $C$ is defined by $C(x) = C(Q,P) = Q^2 + QP + P^2 + P^4$.

The step size is set to $h=0.01$.
As a reference, we obtain an analytic Hessian $\hess_\theta C (x_5(\theta)) |_{\theta = [1,1]^{\top}}$ at $N=5$ with the help of symbolic computation\footnote{We used \textsf{SymPy} package in Julia.}.
Note that symbolic computation is possible only when $N$ is relatively small.
The result is
\begin{align} \label{exact_hess_sp5}
	\hess_\theta C (x_5(\theta)) |_{\theta = [1,1]^{\top}}=
	\begin{bmatrix}
		2.232746371638453 &  & 0.763132203549098 \\
		0.763132203549098 &  & 13.09116739376028
	\end{bmatrix}.
\end{align}
We compute $\xi_0$ using  the proposed approach:
the coupled adjoint system \eqref{adj_eq2} is solved by
\begin{align*}
	\phi_n = \phi_{n+1} + h \nabla_y g(y_{n})^\top \phi_{n+1}.
\end{align*}
We employ two vectors $[1,0]^\top$ and $[0,1]^\top$ as $\gamma$ to obtain the exact Hessian, and the result is
\begin{align}
	\begin{bmatrix}
		\underline{2.232746371638453} &  & \underline{0.763132203549098} \\
		\underline{0.76313220354909}9 &  & \underline{13.0911673937602}7
	\end{bmatrix},
\end{align}
which coincides with \eqref{exact_hess_sp5} to 14 digits (the underlines are drawn for the matched digits).
For comparison,
we solve the coupled adjoint system \eqref{adj_eq2}, i.e., the pair of the (first-order) adjoint system and second-order adjoint system, by applying the explicit Euler method backwardly (more precisely, the explicit method with the exchanges $\phi_{n+1} \leftrightarrow \phi_n$, $y_{n+1} \leftrightarrow y_n$ and $h\leftrightarrow -h$)
\begin{align} \label{adj_fm1}
	\phi_n = \phi_{n+1} + h \nabla_y g(y_{n+1})^\top \phi_{n+1}.
\end{align}
This formula is obviously explicit when the coupled adjoint system \eqref{adj_eq2} is solved backwardly in time. We then obtain the approximated Hessian
\begin{align}
	\begin{bmatrix}
		\underline{2.23}4679307434870 &  & \underline{0.7}71449812673337 \\
		\underline{0.7631}69390266670 &  & \underline{13.091}33376424467
	\end{bmatrix},
\end{align}
which differs from \eqref{exact_hess_sp5} substantially.
We also note that this matrix is no longer symmetric.

\subsection{Allen--Cahn equation}\label{ACE}

We consider an initial value problem of a time-dependent field variable $\psi(t,z)$ driven by the one-dimensional Allen--Cahn equation
\begin{align}
	\psi_{t} = \alpha\psi + \beta\psi_{zz} + \kappa \psi^3, \quad z \in (0,1)
\end{align}
under the Neumann boundary condition: $\psi_{z}(t,0) = \psi_{z}(t,1) = 0$.
In the following numerical experiments, the coefficients and initial value are set to $(\alpha,\beta,\kappa) = (10,0.001,-1)$ and $\psi(0,z) = \cos(\pi z)$.
We discretize the equation in space with $d$ grid points and the grid spacing $\Delta z$, i.e., $\Delta z = 1/(d-1)$, and apply the second-order central difference to the space second-derivative to obtain a semi-discrete scheme:
\begin{equation}
	\frac{\rmd\Psi^{(m)}}{\rmd t}  = \alpha\Psi^{(m)} + \kappa\left(\Psi^{(m)}\right)^3 + \frac{\beta}{\Delta z^2}
	\left\{\begin{array}{ll}
		2\left(\Psi^{(2)}-\Psi^{(1)}\right)   & (m=1)           \\
		\Psi^{(m+1)}-2\Psi^{(m)}+\Psi^{(m-1)} & (m=2,\dots,d-1) \\
		2\left(\Psi^{(d-1)}-\Psi^{(d)}\right) & (m=d)
	\end{array}\right.,
\end{equation}
where $\Psi\in\mathbb{R}^{d}$ is the discretized $\psi$, and we used $\bullet^{(m')}$ to describe a quantity $\bullet$ at $z=(m'-1)\Delta z$ to simplify the notation.
In the following, we solve the semi-discrete scheme and its variational system by the implicit Euler method, that is, we discretize \eqref{eq:coupled} by $y_{n+1} = y_{n} + h g(y_{n+1})$.
A cost function considered here is
\begin{align}
	C(\theta) = \| \Psi_{N}(\theta) - \Psi_{N}(\hat{\theta}) \|^{2}_{2},
\end{align}
where $\|\cdot\|_{2}$ denotes the Euclidean norm. The vector $\theta\in\mathbb{R}^{d}$ is an initial condition for the semi-discrete scheme and $\hat{\theta}$ is the discretized $\psi(0,z)$, i.e., $\hat{\theta}^{(m)}=\cos(\pi(m-1)\Delta z)$ for $m=1,\dots,d$.
The proposed method discretizes the coupled adjoint system \eqref{adj_eq2} by
\begin{align} \label{adj_4-2}
	\phi_n = \phi_{n+1} + h \nabla_y g(y_{n+1})^\top \phi_{n}.
\end{align}
We employ \eqref{adj_fm1} for comparison.
Let $\mathsf{H}$ and $\tilde{\mathsf{H}}$ be the Hessian matrices obtained by applying \eqref{adj_4-2} and \eqref{adj_fm1}, respectively.
We note that the procedure \eqref{adj_fm1} finds $\tilde{\mathsf{H}}$ uniquely since $\phi_0$ depends linearly on $\phi_N$.
The numerical experiments conducted here employ $d=150$, $h = 0.001$ and $N=20$, and show the results using $\theta^{(m)} = 1.05 \cos(\pi (m-1)\Delta z)$ for $m=1,\dots,d$.

First, we check the extent to which the symmetry is preserved or broken in the Hessian matrices by measuring ``degree of asymmetry'' defined by
\begin{align}
	\tau(\mathsf{M})=\|\mathsf{M}-\mathsf{M}^{\top}\|_{\max},
\end{align}
for a given matrix $\mathsf{M}$.
Here, $\|\cdot\|_{\max}$ denotes the maximum norm ($\|\mathsf{M}\|_{\max}=\max_{i,j} |\mathsf{M}_{ij}|$).
In this subsection, we also use the operator norm $\| \cdot \|_{\infty}$ induced by the vector maximum norm.
We observed that $\tau(\tilde{\mathsf{H}})=2.344 \times 10^{-5}$ for \eqref{adj_fm1} and $\tau(\mathsf{H})=1.518 \times 10^{-18}$ for \eqref{adj_4-2}.
This result tells us that the inappropriate discretization for the adjoint systems breaks the Hessian symmetry while the appropriate one based on the proposed method preserves the symmetry high-accurately.

Second, we check the extent to which $\mathsf{H}$ is well approximated by $\tilde{\mathsf{H}}$.
The difference is
$\| \mathsf{H} - \tilde{\mathsf{H}} \|_{\max}= 4.252\times 10^{-5}$
($\| \mathsf{H} - \tilde{\mathsf{H}} \|_{\infty}= 7.640\times 10^{-5}$,
$\| \mathsf{H} - \tilde{\mathsf{H}} \|_\infty / \| \mathsf{H} \| _\infty = 0.01660$,
and
$\| \mathsf{H} - \tilde{\mathsf{H}} \|_\infty / \| \tilde{\mathsf{H}} \| _\infty = 0.01634$),
which has the same order as $\tau(\mathsf{H})-\tau(\tilde{\mathsf{H}})$.
This implies that the difference comes from the asymmetry appeared in $\tilde{\mathsf{H}}$.

Third, we check what happens when solving a linear system $\mathsf{H} v = r$ with respect to $v$.
We employ the conjugate residual (CR) method\footnote{
	The conjugate gradient (CG) method is a method of choice when the coefficient matrix is real and symmetric.
	Note that the matrix $\mathsf{H}$ or $\tilde{\mathsf{H}}$ may have negative eigenvalues.
	The CG method still works even if the coefficient matrix has negative eigenvalues in theory, but we employ the CR method to reduce the risk of break-down.
	We show the results of the CR method only, but we note that similar results were obtained for the CG method in this problem setting.
}.
The tolerance is set to $1.0\times 10^{-8}$ for the relative residual measured by the maximum norm.
The vector $r$ is set to $r = \mathsf{H} v^\text{exact}$, where $v^\text{exact} = (1,0,\dots,0)^{\top}$.
Figure~\ref{fig:AC_exact} compares two approaches: ``proposed'' uses $\mathsf{H}$ while
``approximation'' uses $\tilde{\mathsf{H}}$.
It is observed from the top figure that
the relative residual monotonically decreases for both approaches,
but
faster convergence is observed for the proposed approach.
Note that CR method still works within double precision despite
the symmetry is broken.
However, from the bottom figure, which plots the error $\| v - v^\text{exact} \| _\infty$,
a significant error is observed for $\tilde{\mathsf{H}}$ even after the CR method itself converges,
while the proposed approach reaches the exact solution.
In the context of uncertainty quantification~\cite{it16,th89}, the information we need is not $\tilde{\mathsf{H}}^{-1}$ but $\mathsf{H}^{-1}$.
In this viewpoint, the calculation using $\tilde{\mathsf{H}}$ is problematic.
In particular, the fact that the CR method itself converges could increase the risk of overconfidence.
In contrast, the calculation based on the proposed approach seems of importance.

As a complementary study, let us investigate why the difference between the solution to $\tilde{\mathsf{H}} \tilde{v} = r$ computed by the CR method and $v^\text{exact}$ is so significant
despite of the difference between $\mathsf{H}$ and $ \tilde{\mathsf{H}}$, which is not sufficiently small in double precision but is still much smaller than $\mathrm{O}(1)$.
We compute the condition number\footnote{The condition number was calculated by using $\mathsf{cond}$ function in \textsf{LinearAlgebra} package of julia.} of $\tilde{\mathsf{H}}$, and the result is $\mathrm{cond}_\infty (\tilde{\mathsf{H}}) = 2.993\times 10^5$
($\mathrm{cond}_\infty (\mathsf{H}) = 2.696 \times 10^5$).
Because the condition number is moderate, we suspect that the CR method for $\tilde{\mathsf{H}} \tilde{v} = r$ actually finds an accurate solution to $\tilde{\mathsf{H}}\tilde{v} = r$.
In general, for the solutions to $\mathsf{H}v=r$ and $\tilde{\mathsf{H}} \tilde{v}=r$, it follows that
\begin{align} \label{pert}
	\frac{\| v  - \tilde{v} \|_{\infty}}{\| v\|_{\infty}}
	\leq \mathrm{cond}_\infty (\tilde{\mathsf{H}})
	\frac{\| \mathsf{H} - \tilde{\mathsf{H}} \|_{\infty} }{\| \tilde{\mathsf{H}}\|_{\infty}}.
\end{align}
The right hand side of \eqref{pert} is calculated to be $4.892\times 10^4$.
Since $\|v\|_{\infty} = \|v^{\text{exact}}\|_{\infty} = 1$ in the above problem setting, we see that $\| v - \tilde{v}\|_{\infty}$ could be as big as $\mathrm{O}(10^4)$,
which explains the undesirable property observed in Figure~\ref{fig:AC_exact} (bottom figure).
We note that the difference between $v$ and $\tilde{v}$ could result in the slow convergence in the Newton method (the slow convergence is discussed in Section~\ref{WE}).

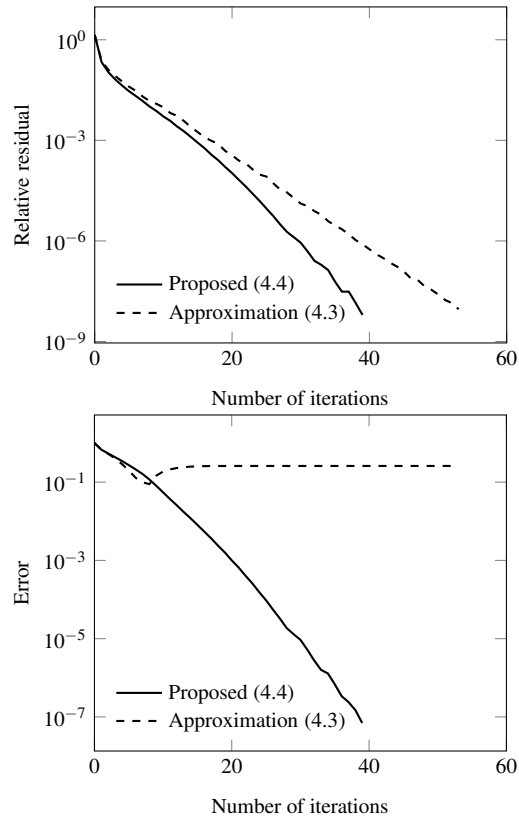
\begin{figure}[htbp]
	\centering
	\begin{tikzpicture}
		\tikzstyle{every node}=[]
		\begin{semilogyaxis}[width=7cm,%restrict x to domain = 0:69,
				xmax=60,xmin=0,
				xlabel={Number of iterations },ylabel={Relative residual},
				ylabel near ticks,
				legend entries={Proposed \eqref{adj_4-2}, Approximation \eqref{adj_fm1} },
				legend style={
						legend cell align=left,draw=none,fill=none,
						legend pos=south west},
			]
			\addplot[thick,
			] table {
					0 1.397e+00
					1 2.154e-01
					2 1.073e-01
					3 6.532e-02
					4 4.301e-02
					5 2.910e-02
					6 2.095e-02
					7 1.482e-02
					8 1.018e-02
					9 7.466e-03
					10 5.166e-03
					11 3.812e-03
					12 2.590e-03
					13 1.858e-03
					14 1.262e-03
					15 8.521e-04
					16 5.810e-04
					17 3.715e-04
					18 2.539e-04
					19 1.606e-04
					20 1.056e-04
					21 6.601e-05
					22 4.184e-05
					23 2.557e-05
					24 1.570e-05
					25 9.273e-06
					26 5.532e-06
					27 3.124e-06
					28 1.852e-06
					29 1.289e-06
					30 8.839e-07
					31 4.867e-07
					32 2.545e-07
					33 1.929e-07
					34 1.347e-07
					35 6.083e-08
					36 3.063e-08
					37 3.068e-08
					38 1.405e-08
					39 6.189e-09
				};
			\addplot[thick, dashed
			] table {
					0 1.397e+00
					1 2.332e-01
					2 1.262e-01
					3 8.269e-02
					4 5.684e-02
					5 3.994e-02
					6 2.981e-02
					7 2.020e-02
					8 1.488e-02
					9 1.224e-02
					10 9.658e-03
					11 6.545e-03
					12 5.376e-03
					13 3.765e-03
					14 2.500e-03
					15 1.763e-03
					16 1.225e-03
					17 9.700e-04
					18 7.783e-04
					19 4.843e-04
					20 3.616e-04
					21 2.481e-04
					22 2.059e-04
					23 1.359e-04
					24 9.569e-05
					25 8.310e-05
					26 5.248e-05
					27 3.427e-05
					28 2.789e-05
					29 1.898e-05
					30 1.301e-05
					31 1.090e-05
					32 7.722e-06
					33 6.016e-06
					34 3.863e-06
					35 2.895e-06
					36 2.228e-06
					37 1.580e-06
					38 1.057e-06
					39 7.947e-07
					40 5.669e-07
					41 4.017e-07
					42 3.097e-07
					43 2.244e-07
					44 1.729e-07
					45 1.241e-07
					46 7.788e-08
					47 6.967e-08
					48 4.522e-08
					49 3.528e-08
					50 2.546e-08
					51 1.745e-08
					52 1.395e-08
					53 9.175e-09
				};
		\end{semilogyaxis}
	\end{tikzpicture}
	\begin{tikzpicture}
		\tikzstyle{every node}=[]
		\begin{semilogyaxis}[width=7cm,%restrict x to domain = 0:69,
				xmax=60,xmin=0,
				xlabel={Number of iterations},ylabel={Error },
				ylabel near ticks,
				legend entries={Proposed \eqref{adj_4-2}, Approximation \eqref{adj_fm1}},
				legend style={
						legend cell align=left,draw=none,fill=none,
						legend pos=south west},
			]
			\addplot[thick,
			] table {
					0 1.000e+00
					1 6.770e-01
					2 5.376e-01
					3 4.340e-01
					4 3.475e-01
					5 2.724e-01
					6 2.118e-01
					7 1.615e-01
					8 1.170e-01
					9 8.056e-02
					10 5.472e-02
					11 3.704e-02
					12 2.548e-02
					13 1.723e-02
					14 1.194e-02
					15 8.064e-03
					16 5.439e-03
					17 3.660e-03
					18 2.381e-03
					19 1.590e-03
					20 9.994e-04
					21 6.580e-04
					22 4.053e-04
					23 2.582e-04
					24 1.545e-04
					25 9.543e-05
					26 5.535e-05
					27 3.286e-05
					28 1.868e-05
					29 1.303e-05
					30 9.353e-06
					31 5.237e-06
					32 2.776e-06
					33 1.597e-06
					34 1.291e-06
					35 6.850e-07
					36 3.401e-07
					37 2.400e-07
					38 1.494e-07
					39 6.993e-08
				};
			\addplot[thick,  dashed
			] table {
					0 1.000e+00
					1 6.795e-01
					2 5.291e-01
					3 4.052e-01
					4 2.899e-01
					5 1.992e-01
					6 1.292e-01
					7 9.902e-02
					8 8.885e-02
					9 1.459e-01
					10 1.864e-01
					11 2.115e-01
					12 2.290e-01
					13 2.408e-01
					14 2.487e-01
					15 2.533e-01
					16 2.559e-01
					17 2.574e-01
					18 2.580e-01
					19 2.584e-01
					20 2.587e-01
					21 2.588e-01
					22 2.589e-01
					23 2.589e-01
					24 2.589e-01
					25 2.589e-01
					26 2.589e-01
					27 2.589e-01
					28 2.589e-01
					29 2.589e-01
					30 2.589e-01
					31 2.589e-01
					32 2.589e-01
					33 2.589e-01
					34 2.589e-01
					35 2.589e-01
					36 2.589e-01
					37 2.589e-01
					38 2.589e-01
					39 2.589e-01
					40 2.589e-01
					41 2.589e-01
					42 2.589e-01
					43 2.589e-01
					44 2.589e-01
					45 2.589e-01
					46 2.589e-01
					47 2.589e-01
					48 2.589e-01
					49 2.589e-01
					50 2.589e-01
					51 2.589e-01
					52 2.589e-01
					53 2.589e-01
				};
		\end{semilogyaxis}
	\end{tikzpicture}
	\caption{The convergence behavior of the CR method.
		Both the relative residual and error are measured by the maximum norm.
	}
	\label{fig:AC_exact}
\end{figure}

\subsection{Wave equation}\label{WE}
We validate the proposed method through a problem to estimate an inner structure from a wave form of displacement field driven by the one-dimensional inhomogeneous wave equation
\begin{align} \label{waveeqpde}
	u_{tt} & = \left(wu_{z}\right)_{z}, \quad z\in\left[0,L\right)
\end{align}
under a periodic boundary condition, where $u\left(t,z\right)$ is a displacement field and the spatial-dependent function $w\left(z\right)$ is a ``structure field'' to be estimated.
In the following numerical experiments, the initial conditions of $u$ and its time derivative $u_{t}$ are set to $u\left(0,z\right)=16z^2\left(L-z\right)^2\slash L^4$ and $u_{t}\left(0,z\right)=0$, and we assume that $u\left(0,z\right)$ and $u_{t}\left(0,z\right)$ are known but $w$ is not, that is, the structure field $w$ is a unique control variable that determines the time evolution of the wave form of $u$.
Such a problem to estimate the unobservable inner structure field from the acoustic response of objective materials appears ubiquitously in various scientific fields such as seismology \cite{fi06} and engineering \cite{hu04,ro13}.
We discretize \eqref{waveeqpde} in space via a staggered grid with $d$ grid points and grid spacing $\Delta z$, i.e., $\Delta z = L/d$, and then we obtain
a semi-discrete scheme of \eqref{waveeqpde} given by
\begin{equation}\label{waveeq}
	\begin{aligned}
		 & \frac{\rmd U^{(m)} }{\rmd t} = V^{(m)} \quad (m=1,\dots,d)                   \\
		 & \frac{\rmd V^{(m)} }{\rmd t} = \frac{1}{\Delta z^2}
		\left\{\begin{array}{ll}
			W^{\left(\frac{3}{2}\right)} \left( U^{(2)} - U^{(1)}\right)              &                 \\
			\qquad - W^{\left(d+\frac{1}{2}\right)} \left( U^{(1)} - U^{(d)}\right)   & (m=1)           \\
			W^{\left(m+\frac{1}{2}\right)} \left( U^{(m+1)} - U^{(m)}\right)          &                 \\
			\qquad - W^{\left(m-\frac{1}{2}\right)} \left( U^{(m)} - U^{(m-1)}\right) & (m=2,\dots,d-1) \\
			W^{\left(d+\frac{1}{2}\right)} \left( U^{(1)} - U^{(d)}\right)            &                 \\
			\qquad - W^{\left(d-\frac{1}{2}\right)} \left( U^{(d)} - U^{(d-1)}\right) & (m=d)           \\
		\end{array}\right.                                        \\
		 & \frac{\rmd W^{\left(m+\frac{1}{2}\right)} }{\rmd t} = 0 \quad (m=1,\dots,d),
	\end{aligned}
\end{equation}
where $U\in\mathbb{R}^{d}$, $V\in\mathbb{R}^{d}$, and $W\in\mathbb{R}^{d}$ are the discretized $u$, $u_{t}$, and $w$, respectively, and $\bullet^{(m')}$ indicates a quantity $\bullet$ at $z=(m'-1)\Delta z$.
Throughout this section, the parameters $L$ and $\Delta z$ are fixed to $L=64$ and $\Delta z =1$ (i.e., $d=64$), and the semi-discrete scheme \eqref{waveeq} and its variational system are solved by the following 2-stage second order explicit Runge--Kutta (Heun) method:
\begin{align}
	\begin{aligned}
		y_{n+1} & = y_n + \frac{h}{2}\left(p_{n,1}+p_{n,2}\right), \\
		p_{n,1} & = g\paren*{Y_{n,1}},                             \\
		p_{n,2} & = g\paren*{Y_{n,2}},                             \\
		Y_{n,1} & = y_n,                                           \\
		Y_{n,2} & = y_n + h p_{n,1}.
	\end{aligned}
\end{align}

The cost function $C$ considered here is a sum of squared residuals between the time evolution of $U$ depending on $W$ to be estimated and that of ``observation'' of $U$:
\begin{equation}
	C\left(W\right) = \sum_{t_{n}\in T^{\text{obs}}} \| U_{n}\left(W\right)-U_{n}^{\text{obs}}\|^{2}_{2},
\end{equation}
where $T^{\text{obs}}$ is a set of the observation times, and $U^{\text{obs}}$ is the observation of $U$ obtained by solving \eqref{waveeq} assuming a ``true'' structure field $w^{\text{true}}(z)=0.5+0.25\sin\left(4\pi z\slash L\right)$.
Note that we use the Heun method with the common step size when calculating $U_{n}\left(W\right)$ and $U_{n}^{\text{obs}}$, meaning that $C$ is exactly zero when $W$ is given by the discretized $w^{\text{true}}$.
The set of observation times is fixed to $T^{\text{obs}}=\{t^{\text{obs}}\mid t^{\text{obs}}=0.2j,\;0\le j\le10,j\in\mathbb{Z}\}$ in the following experiments.

First, as well as the first experiment in Section~\ref{ACE}, we start from checking the symmetry of the Hessian.
The Hessian is evaluated at the point where $W$ is given by the discretized $w_{\text{true}}$, i.e., at the global minimum point.
Let $\mathsf{H}$ and $\tilde{\mathsf{H}}$ respectively be the Hessian evaluated by the proposed method and the Hessian obtained by applying the Heun method to the coupled adjoint system \eqref{adj_eq2} backwardly (more precisely, the Heun method with the exchanges $\phi_{n+1} \leftrightarrow \phi_n$, $y_{n+1} \leftrightarrow y_n$ and $h\leftrightarrow -h$):
\begin{align}
	\begin{aligned}
		\phi_n        & = \phi_{n+1} + \frac{h}{2}(\bar{q}_{n,1} + \bar{q}_{n,2}), \\
		\bar{q}_{n,2} & = \nabla_y g(y_{n+1})^\top \Phi_{n,2},                     \\
		\bar{q}_{n,1} & = \nabla_y g(y_n)^\top \Phi_{n,1},                         \\
		\Phi_{n,2}    & = \phi_{n+1},                                              \\
		\Phi_{n,1}    & = \phi_{n+1} + h \bar{q}_{n,2}.
	\end{aligned}\label{aHeun}
\end{align}
Although the difference between \eqref{aHeun} and the scheme of the proposed method
\begin{align}
	\begin{aligned}
		\phi_n        & = \phi_{n+1} + \frac{h}{2}(\bar{q}_{n,1} + \bar{q}_{n,2}), \\
		\bar{q}_{n,2} & = \nabla_y g(Y_{n,2})^\top \Phi_{n,2},                     \\
		\bar{q}_{n,1} & = \nabla_y g(Y_{n,1})^\top \Phi_{n,1},                     \\
		\Phi_{n,2}    & = \phi_{n+1},                                              \\
		\Phi_{n,1}    & = \phi_{n+1} + h \bar{q}_{n,2},
	\end{aligned}\label{apropose}
\end{align}
is only on the arguments of $\nabla_y g$, the small difference causes a significant difference on the symmetry of the Hessian.
Figure~\ref{fig:WavHes} shows each degree of asymmetry, $\tau(\mathsf{H})$ and $\tau(\tilde{\mathsf{H}})$, as a function of step size $h$ used for the time integrators.
Obviously the proposed method can suppress the degree of asymmetry while $\tilde{\mathsf{H}}$ cannot reproduce the symmetry when using large $h$.
The degree of asymmetry $\tau(\tilde{\mathsf{H}})$ is scaled by $h^2$, which results from the accuracy of the Heun method, meanwhile $\tau(\mathsf{H})$ does not show such convergence but a slow increase proportional to $h^{-1}$ as $h$ gets smaller mainly due to an accumulation of round-off error.
This result demonstrates that the proposed method can calculate an ``exact'' Hessian up to round-off.

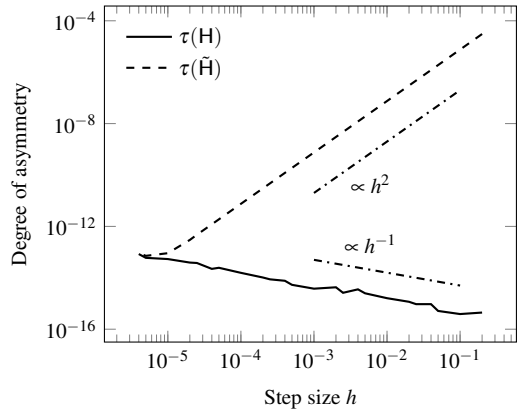
\begin{figure}[htbp]
	\centering
	\begin{tikzpicture}
		\node at(3.5,2.1) {$\propto h^2$};
		\node at(3.5,1.3) {$\propto h^{-1}$};
		\tikzstyle{every node}=[]
		\begin{loglogaxis}[width=7cm,
				xlabel={Step size $h$},ylabel={Degree of asymmetry},
				ylabel near ticks,
				legend entries={$\tau(\mathsf{H})$, $\tau(\tilde{\mathsf{H}})$ },
				legend style={
						legend cell align=left,draw=none,fill=none,
						legend pos=north west},
			]
			\addplot[thick,
			] table {
					2.000000e-01 4.440892e-16
					1.000000e-01 3.885781e-16
					5.000000e-02 5.134781e-16
					4.000000e-02 9.436896e-16
					2.500000e-02 9.436896e-16
					2.000000e-02 1.165734e-15
					1.000000e-02 1.609823e-15
					5.000000e-03 2.498002e-15
					4.000000e-03 3.552714e-15
					2.500000e-03 2.609024e-15
					2.000000e-03 4.274359e-15
					1.000000e-03 3.774758e-15
					5.000000e-04 5.329071e-15
					4.000000e-04 7.660539e-15
					2.500000e-04 8.715251e-15
					2.000000e-04 1.026956e-14
					1.000000e-04 1.559863e-14
					5.000000e-05 2.453593e-14
					4.000000e-05 2.225997e-14
					2.500000e-05 3.769207e-14
					2.000000e-05 3.913536e-14
					1.000000e-05 5.362377e-14
					5.000000e-06 5.978551e-14
					4.000000e-06 8.404388e-14
				};
			\addplot[thick, dashed
			] table {
					2.000000e-01 3.054165e-05
					1.000000e-01 7.587437e-06
					5.000000e-02 1.891973e-06
					4.000000e-02 1.210230e-06
					2.500000e-02 4.723648e-07
					2.000000e-02 3.022298e-07
					1.000000e-02 7.551447e-08
					5.000000e-03 1.887308e-08
					4.000000e-03 1.207806e-08
					2.500000e-03 4.717567e-09
					2.000000e-03 3.019152e-09
					1.000000e-03 7.547421e-10
					5.000000e-04 1.886803e-10
					4.000000e-04 1.207542e-10
					2.500000e-04 4.716852e-11
					2.000000e-04 3.018927e-11
					1.000000e-04 7.545603e-12
					5.000000e-05 1.884451e-12
					4.000000e-05 1.205813e-12
					2.500000e-05 4.744122e-13
					2.000000e-05 2.973177e-13
					1.000000e-05 9.064971e-14
					5.000000e-06 7.149836e-14
					4.000000e-06 9.309220e-14
				};
			\addplot[thick, dash dot
			] table {
					1.0e-03 2.0e-11
					1.0e-01 2.0e-7
				};
			\addplot[thick, dash dot
			] table {
					1.0e-03 5.0e-14
					1.0e-01 5.0e-15
				};
		\end{loglogaxis}
	\end{tikzpicture}
	\caption{The degree of asymmetry as a function of step size $h$.
		The solid and dashed lines indicate $\tau(\mathsf{H})$ and $\tau(\tilde{\mathsf{H}})$, respectively.
		The dash-dotted lines indicate the quadratic and inverse functions of $h$.
	}
	\label{fig:WavHes}
\end{figure}

Next, we check the speed of convergence in the optimization based on the Newton method.
When conducting a Newton-type method to optimize a nonlinear function like $C$, a linear equation $\left(\mathsf{H}+\mathsf{D}\right)\nu=-\nabla C$ needs to be solved to get a descent direction $\nu$ in each iteration step, where $\mathsf{D}$ is a method-dependent diagonal matrix \cite{ne18,li04}.
Whether a correct descent direction is obtained or not largely depends not only on the symmetry of Hessian, as seen in the third experiment of Section~\ref{ACE}, but also on the error involved in the gradient $\nabla C$, which needs to be calculated by solving an adjoint system.
This experiment sets the step size to $h=0.2$ for all of the time integrators and employs the Levenberg-Marquardt-regularized Newton (LMRN) method \cite{ne18} to optimize the cost function $C$ with an initial guess set to $W=0.5$, and then measures the performance by the number of computations of the coupled adjoint system~\eqref{adj_eq2} needed in the optimization of $C$ (We call this ``number of backward evaluations'' for simplicity).
The LMRN method employs a tolerance set to $1.0\times10^{-8}$ for the residual measured by the maximum norm and the CR method to solve the linear equations with the same tolerance as the one used in Section~\ref{ACE}.
The result of the experiment shows that, although the optimum solutions in both cases accord with the true structure field, there is a significant difference between their speeds of convergence to get the solutions.
The lines ``proposed'' and ``approximation'' in Figure~\ref{fig:WavOpt} are the convergence behaviors to attain the optimum solutions, in which \eqref{apropose} and \eqref{aHeun} are respectively used, as a function of the number of backward evaluations.
The optimization using $\mathsf{H}$ based on the proposed method is more than twice faster than that using $\tilde{\mathsf{H}}$.
We confirmed that the former is robustly faster than the latter by checking the other cases using different step sizes.
This result lets us conclude that the proposed method provides us a great benefit in the viewpoint of the speed of convergence in the nonlinear optimization.

\begin{figure}[htbp]
	\centering
	\begin{tikzpicture}
		\tikzstyle{every node}=[]
		\begin{semilogyaxis}[width=7cm,%restrict x to domain = 0:69,
				xmax=3000,xmin=-100,
				ymax=1e+03,
				xlabel={Number of backward evaluations},ylabel={Cost function $C$},
				ylabel near ticks,
				legend entries={Proposed \eqref{apropose}, Approximation \eqref{aHeun}},
				legend style={
						legend cell align=left,draw=none,fill=none,
						legend pos=north east},
			]
			\addplot[thick,
			] table {
					0 1.505121e+01
					75 1.050699e+01
					146 7.137063e+00
					215 4.764256e+00
					274 3.217140e+00
					329 2.277385e+00
					375 1.719373e+00
					417 1.374206e+00
					443 5.606778e-01
					516 2.397665e-01
					571 1.622927e-01
					730 3.844081e-02
					818 9.552570e-03
					903 1.909584e-03
					992 2.371911e-04
					1081 9.348913e-06
					1162 2.553896e-08
					1243 2.180437e-13
				};
			\addplot[thick, dashed
			] table {
					0 1.505121e+01
					171 1.050719e+01
					376 7.137350e+00
					566 4.764549e+00
					697 3.217384e+00
					805 2.277565e+00
					886 1.719503e+00
					952 1.374305e+00
					984 5.610419e-01
					1143 2.399016e-01
					1257 1.621748e-01
					1942 3.841895e-02
					2098 9.542329e-03
					2247 1.904435e-03
					2407 2.359495e-04
					2563 9.257386e-06
					2700 2.505573e-08
					2838 2.099089e-13
				};
		\end{semilogyaxis}
	\end{tikzpicture}
	\caption{The variation of the cost function in the optimization as a function of the number of backward evaluations.
	}
	\label{fig:WavOpt}
\end{figure}
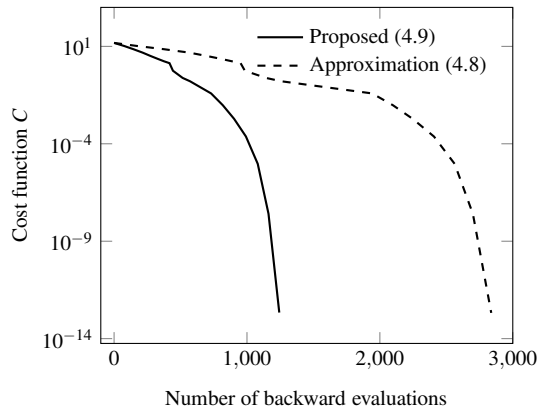

\section{Conclusion}
\label{sec5}

In this paper, we have shown a concise derivation of the second-order adjoint system, and a procedure for computing a matrix-vector multiplication exactly, where the matrix is the Hessian of a function of the numerical solution of an initial value problem with respect to the initial value.
The fact that the second-order adjoint system can be reformulated to a part of a large adjoint system is the key point to obtain the exact Hessian-vector multiplication based on the Sanz-Serna scheme.

The proposed method can be used either to obtain the exact Hessian or to solve a linear system having the Hessian as the coefficient matrix based on a Krylov subspace method.
Particularly in the latter case, the proposed method can contribute a rapid convergence since the accuracy of Hessian-vector multiplication affects the speed of convergence directly.
The importance of calculating the exact Hessian was illustrated for the Allen--Cahn and wave equations.

We plan to test the method to quantify the uncertainty for the estimation of more practical problems.
We note that in many applications an ODE system often arises from discretizing a time-dependent partial differential equation (PDE) in space.
However, discretizing PDEs in space before taking the adjoint may lead to a very strong nonphysical behavior~\cite{st97} (this issue has been partially addressed in~\cite{ti19}).
Since the proposed method does not care about the spatial discretization, we have to take the spatial discretization into account when testing the proposed method to practical problems.
Considering with such spatial discretization may provide an optimal combination of time and spatial discretizations for given problems.

\begin{acknowledgements}
	This work was triggered by discussions in the research projects of JST CREST Grant Numbers JPMJCR1761 and JPMJCR1763, JST ACT-I Grant Number JPMJPR18US, JSPS Grants-in-Aid for Scientific Research (B) Grant Number 17H01703, and JSPS Grants-in-Aid for Early-Career Scientists Grant Numbers 16K17550, 19K14671 and 19K20220.
\end{acknowledgements}

% BibTeX users please use one of
%\bibliographystyle{spbasic}      % basic style, author-year citations
\bibliographystyle{spmpsci}      % mathematics and physical sciences
\bibliography{references}   % name your BibTeX data base

\begin{thebibliography}{10}
\providecommand{\url}[1]{{#1}}
\providecommand{\urlprefix}{URL }
\expandafter\ifx\csname urlstyle\endcsname\relax
  \providecommand{\doi}[1]{DOI~\discretionary{}{}{}#1}\else
  \providecommand{\doi}{DOI~\discretionary{}{}{}\begingroup
  \urlstyle{rm}\Url}\fi

\bibitem{as93}
Abia, L., Sanz-Serna, J.M.: Partitioned {R}unge--{K}utta methods for separable
  {H}amiltonian problems.
\newblock Math. Comp. \textbf{60}(202), 617--634 (1993).
\newblock \doi{10.2307/2153105}

\bibitem{ch18}
Chen, R.T., Rubanova, Y., Bettencourt, J., Duvenaud, D.: Neural ordinary
  differential equations.
\newblock In: Advances in Neural Information Processing Systems 31, pp.
  6571--6583 (2018)

\bibitem{di86}
Dimet, F.X.L., Talagrand, O.: Variational algorithms for analysis and
  assimilation of meteorological observations: theoretical aspects.
\newblock Tellus A \textbf{38A}(2), 97--110 (1986).
\newblock \doi{10.1111/j.1600-0870.1986.tb00459.x}

\bibitem{fi06}
Fichtner, A., Bunge, H.P., Igel, H.: The adjoint method in seismology: I.
  theory.
\newblock Phys. Earth Planet. In. \textbf{157}(1), 86--104 (2006).
\newblock \doi{https://doi.org/10.1016/j.pepi.2006.03.016}

\bibitem{gi00}
Giles Michael B.and~Pierce, N.A.: An introduction to the adjoint approach to
  design.
\newblock Flow, Turbul. Combust. \textbf{65}(3), 393--415 (2000).
\newblock \doi{10.1023/A:1011430410075}

\bibitem{ha94}
Hairer, E.: Backward analysis of numerical integrators and symplectic methods.
\newblock Ann. Numer. Math. \textbf{1}(1-4), 107--132 (1994)

\bibitem{hl06}
Hairer, E., Lubich, C., Wanner, G.: Geometric Numerical Integration:
  Structure-Preserving Algorithms for Ordinary Differential quations, second
  edn.
\newblock Springer-Verlag, Berlin (2006)

\bibitem{hu04}
Hursky, P., Porter, M.B., Cornuelle, B.D., Hodgkiss, W.S., Kuperman, W.A.:
  Adjoint modeling for acoustic inversion.
\newblock J. Acoust. Soc. Am. \textbf{115}(2), 607--619 (2004).
\newblock \doi{10.1121/1.1636760}

\bibitem{it16}
Ito, S., Nagao, H., Yamanaka, A., Tsukada, Y., Koyama, T., Kano, M., Inoue, J.:
  Data assimilation for massive autonomous systems based on a second-order
  adjoint method.
\newblock Phys. Rev. E \textbf{94}(4), 043307 (2016).
\newblock \doi{10.1103/PhysRevE.94.043307}

\bibitem{li04}
Li, D.H., Fukushima, M., Qi, L., Yamashita, N.: Regularized newton methods for
  convex minimization problems with singular solutions.
\newblock Comput. Optim. Appl. \textbf{28}(2), 131--147 (2004)

\bibitem{mu97}
Murua, A.: On order conditions for partitioned symplectic methods.
\newblock SIAM J. Numer. Anal. \textbf{34}(6), 2204--2211 (1997).
\newblock \doi{10.1137/S0036142995285162}

\bibitem{ne18}
Nesterov, Y.: Lectures on convex optimization, vol. 137.
\newblock Springer (2018)

\bibitem{ro13}
Rosenthal, A., Ntziachristos, V., Razansky, D.: Acoustic inversion in
  optoacoustic tomography: A review.
\newblock Curr. Med. Imaging Rev. \textbf{9}(4), 318--336 (2013)

\bibitem{sa03}
Saad, Y.: Iterative methods for sparse linear systems, second edn.
\newblock Society for Industrial and Applied Mathematics, Philadelphia, PA
  (2003).
\newblock \doi{10.1137/1.9780898718003}

\bibitem{ss16}
Sanz-Serna, J.M.: Symplectic {R}unge--{K}utta schemes for adjoint equations,
  automatic differentiation, optimal control, and more.
\newblock SIAM Rev. \textbf{58}(1), 3--33 (2016).
\newblock \doi{10.1137/151002769}

\bibitem{st97}
Sirkes, Z., Tziperman, E.: Finite difference of adjoint or adjoint of finite
  difference?
\newblock Mon. Weather Rev. \textbf{125}(12), 3373--3378 (1997).
\newblock \doi{10.1175/1520-0493(1997)125<3373:fdoaoa>2.0.co;2}

\bibitem{su93}
Sun, G.: Symplectic partitioned {R}unge--{K}utta methods.
\newblock J. Comput. Math. \textbf{11}(4), 365--372 (1993)

\bibitem{ti19}
Tanaka, T., Matsuo, T., Ito, S., Nagao, H.: Making adjoint discretizations
  consistent in partial differential equations.
\newblock In preparation

\bibitem{th89}
Thacker, W.C.: The role of the hessian matrix in fitting models to
  measurements.
\newblock J. Geophys. Res. Oceans \textbf{94}(C5), 6177--6196 (1989).
\newblock \doi{10.1029/JC094iC05p06177}

\bibitem{wa98}
Wang, Z., Droegemeier, K., White, L.: The adjoint {N}ewton algorithm for
  large-scale unconstrained optimization in meteorology applications.
\newblock Comput. Optim. and Appl. \textbf{10}(3), 283--320 (1998).
\newblock \doi{10.1023/A:1018321307393}

\bibitem{wa92}
Wang, Z., Navon, I.M., Dimet, F.X.L., Zou, X.: The second order adjoint
  analysis: Theory and applications.
\newblock Meteor. Atmos. Phys. \textbf{50}(1-3), 3--20 (1992).
\newblock \doi{10.1007/bf01025501}

\end{thebibliography}

\end{document}